\newcommand{\si}[1]{#1}
\newcommand{\jo}[1]{}
\newcommand{\R}{{\mathbb R}}
\newcommand{\T}{{\mathtt T}}
 \journalname{JOTA}
\begin{document}
\title{An Extension of Yuan's Lemma and its Applications in Optimization
}


\author{G. Haeser
}


\institute{G. Haeser \at
              Department of Applied Mathematics, University of S\~ao Paulo, S\~ao Paulo SP, Brazil. This research was conducted while holding a Visiting Scholar position at Department of Management Science and Engineering, Stanford University, Stanford CA, USA. \\
              \email{ghaeser@ime.usp.br}           
}

\date{Received: date / Accepted: date}
}

\si{
\documentclass{article}
\usepackage{amssymb,latexsym,amsthm,amsmath}
\usepackage{verbatim}
\usepackage{hyperref}

\theoremstyle{plain}
\newtheorem{theorem}{Theorem}
\newtheorem{corollary}{Corollary}
\newtheorem*{main}{Main~Theorem}
\newtheorem{lemma}{Lemma}
\newtheorem*{conjecture}{Conjecture}
\newtheorem{proposition}{Proposition}
\theoremstyle{definition}
\newtheorem{definition}{Definition}
\newtheorem{example}{Example}
\newtheorem{counter}{Counter-Example}
\newtheorem{assumption}{Assumption}

\theoremstyle{remark}

\newtheorem{remark}{Remark}

\numberwithin{equation}{section}

\newcommand{\R}{{\mathbb R}}
\newcommand{\T}{{\mathtt T}}

\title{An extension of Yuan's Lemma and its applications in optimization}

\author{G. Haeser\thanks{Department of Applied Mathematics,
    University of S\~ao Paulo, S\~ao Paulo, SP, Brazil.
    Email: {\tt ghaeser@ime.usp.br}. This research was conducted while holding a Visiting Scholar position at Department of Management Science and Engineering, Stanford University, Stanford CA, USA.}}
\date{March 1, 2017. Reviewed April 7, 2017.}
\begin{document}
}

\maketitle

\begin{abstract}

We prove an extension of Yuan's Lemma to more than two matrices, as long as the set of matrices has rank at most $2$. This is used to generalize the main result of {\it [A. Baccari and A. Trad. On the classical necessary second-order optimality conditions in the presence of equality and inequality constraints. SIAM J. Opt., 15(2):394--408, 2005]}, where the classical necessary second-order optimality condition is proved under the assumption that the set of Lagrange multipliers is a bounded line segment. We prove the result under the more general assumption that the Hessian of the Lagrangian evaluated at the vertices of the Lagrange multiplier set is a matrix set with at most rank $2$.
 We apply the results to prove the classical second-order optimality condition to problems with quadratic constraints and without constant rank of the Jacobian matrix.

\jo{
\keywords{Quadratic forms, second-order optimality conditions, global convergence.}
 \subclass{90C30 \and 90C46}
}
\si{{\bf Keywords: }Quadratic forms, second-order optimality conditions, global convergence.
}
\end{abstract}

\section{Introduction}

In a general nonlinear optimization problem, it is well known that when the set of Lagrange multipliers is bounded, a local minimizer satisfies a second-order optimality condition in the form of the maximum of many quadratic forms being non-negative in the critical cone, where each quadratic form is defined by the Hessian of the Lagrangian function evaluated at different Lagrange multipliers.

In this paper we are interested in second-order optimality conditions that can be verified with one single Lagrange multiplier. This is motivated by algorithmic considerations, since algorithms usually generate a primal-dual sequence and one is interested in proving that limit points of this sequence satisfy a first- or second-order optimality condition. Under this setting, one is usually restricted to considering a smaller subset of the true critical cone.

Yuan's Lemma \cite{yuan} gives an important tool for this type of results, since it states that when the maximum of two quadratic forms is non-negative, some convex combination of the matrices is positive semidefinite. This implies that when the set of Lagrange multipliers is a bounded line segment, one can find a Lagrange multiplier such that the Hessian of the Lagrangian is positive semidefinite on a subset of the critical cone. This is the main result of \cite{baccaritrad}. However, this approach does not work when there are more than two quadratic forms, that is, when the Lagrange multiplier set is larger than a line segment.

In Section 2, we will extend Yuan's Lemma to more than two quadratic forms. This will be done under an assumption of redundancy on the matrices defining the quadratic forms, in the sense that it is a matrix set of rank at most $2$. This gives, in Section 3, an optimality condition verifiable on one single Lagrange multiplier under an assumption of redundancy of the set of Hessians of the Lagrangian matrices that includes as a particular case the case of a Lagrange multiplier set equals to a line segment. In Section 4 we apply the results to a special form of quadratically-constrained problems, and we prove that our assumption holds when the rank of the Jacobian matrix increases at most by one in the neighborhood of a local solution. Section 5 gives some conclusions.

{\bf Notation:} A set $K\subseteq\R^n$ is a first-order cone if $K$ is the direct sum of a subspace and a ray, where a ray is a set of the form $\{td_0\mid t\geq0\}$, for some $d_0\in\R^n$. Given a set $K\subseteq\R^n$ and a symmetric matrix $A\in\R^{n\times n}$, we say that $A$ is positive semidefinite on $K$ if $x^\T Ax\geq0$ for all $x\in K$. We denote by $\R^m_+$ the non-negative orthant of $\R^m$ and $\Lambda_m=\{t\in\R^m_+\mid \sum_{i=1}^mt_i=1\}$ the $m$-dimensional simplex. The rank of a set of matrices is the maximum number of linearly independent matrices viewed as vectors on the appropriate space.

\section{Extension of Yuan's Lemma}


An important result related to the trust-region subproblem is the following:

\begin{lemma}[Yuan's Lemma \cite{yuan}] 
\label{yuan}
Let $A,B\in\R^{n\times n}$ be symmetric matrices and $K\subseteq\R^n$ be a first-order cone. Then the following conditions are equivalent:
\begin{itemize}
\item $\max\{d^\mathtt{T}Ad,d^\mathtt{T}B d\}\geq0, \forall d\in K$,
\item there exists $t_1\geq0, t_2\geq0$ with $t_1+t_2=1$ such that $t_1A+t_2B$ is positive semidefinite on $K$.
\end{itemize}
\end{lemma}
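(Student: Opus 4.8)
The plan is to prove the two implications separately, with the direction from the second condition to the first being routine and the reverse being the substantive part. For the easy direction, suppose $t_1A+t_2B$ is positive semidefinite on $K$ with $(t_1,t_2)\in\Lambda_2$. Then for every $d\in K$ we have $t_1(d^\T Ad)+t_2(d^\T Bd)\geq 0$; if both $d^\T Ad$ and $d^\T Bd$ were negative this convex combination would be negative, so the maximum of the two is non-negative.

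For the reverse direction I would pass to the joint range $W=\{(d^\T Ad,\,d^\T Bd)\mid d\in K\}\subseteq\R^2$. The first condition says precisely that $W$ avoids the open third quadrant $Q=\{(a,b)\mid a<0,\ b<0\}$, while the desired second condition is equivalent to the existence of $(t_1,t_2)\in\Lambda_2$ with $t_1a+t_2b\geq 0$ for all $(a,b)\in W$, i.e.\ to $W$ lying in a closed half-plane through the origin whose normal has non-negative entries. So the whole statement reduces to a separation property of the set $W$.

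The key step, and the one I expect to be the main obstacle, is establishing that $W$ is convex. First I would reduce the first-order cone to its linear span: writing $K=L+\{td_0\mid t\geq0\}$ with $L$ a subspace, I use that the quadratic forms are even, so for $d=\ell+td_0$ with $t<0$ the point $-d=(-\ell)+(-t)d_0$ lies in $K$ and realizes the same pair of values. Hence $W$ coincides with the joint range over the subspace $\widetilde K=\mathrm{span}(K)=L+\R d_0$. On this subspace, after fixing an orthonormal basis and restricting $A$ and $B$, Dines' classical theorem on the joint numerical range of two real quadratic forms applies and shows that $W$ is a convex cone containing the origin (convexity being the nontrivial ingredient; the cone property follows from $(\lambda d)^\T A(\lambda d)=\lambda^2 d^\T Ad$).

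Finally, with $W$ a convex cone disjoint from the open convex cone $Q$, the separating hyperplane theorem provides a nonzero $(t_1,t_2)$ and a constant $c$ with $t_1a+t_2b\geq c$ on $W$ and $t_1a+t_2b\leq c$ on $Q$. Boundedness above on the open third quadrant forces $t_1\geq0$ and $t_2\geq0$ (otherwise sending one coordinate to $-\infty$ makes the functional unbounded above on $Q$), and letting $(a,b)\to(0,0)$ within $Q$ gives $c\geq0$; since $0\in W$ gives $c\leq0$, we conclude $c=0$. Normalizing so that $t_1+t_2=1$ yields $t_1a+t_2b\geq0$ for all $(a,b)\in W$, which is exactly the statement that $t_1A+t_2B$ is positive semidefinite on $K$.
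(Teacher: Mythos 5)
Your proof is correct. Note, though, that the paper does not actually prove this lemma: it is stated as a cited result (Yuan \cite{yuan}; the first-order-cone version is attributed to \cite[Corollary 3.2]{baccaritrad}), and the text only remarks that the case $K=\R^n$ ``can be easily proved with a separation argument due to the convexity of the set $\{(x^\T Ax,x^\T Bx)\mid x\in\R^n\}$ given in \cite{dines}.'' Your argument is precisely that remark made rigorous, together with the one extra ingredient needed for a general first-order cone: since quadratic forms are even and $K=L+\{td_0\mid t\geq0\}$ with $L$ a subspace, every $d$ in $\mathrm{span}(K)$ has either $d$ or $-d$ in $K$, so the joint range over $K$ equals the joint range over the subspace $\mathrm{span}(K)$, where Dines' theorem applies after restricting $A$ and $B$. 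The separation step is also handled correctly: the signs $t_1,t_2\geq0$ follow from boundedness of the functional above on the open third quadrant, and $c=0$ follows from $0\in W$ together with the limit $(a,b)\to(0,0)$ inside the quadrant. So your write-up supplies a complete, self-contained proof along the exact route the paper gestures at; the only caveat is that it leans on Dines' convexity theorem as a black box, whereas Yuan's original proof is more elementary and does not pass through convexity of the joint range.
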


 
 Lemma \ref{yuan} is known as Yuan's Lemma \cite{yuan} and it has had many important generalizations \cite{baccaritrad,hiriart,seeger1,seeger2,noteyuan}. In particular, our presentation of Yuan's Lemma corresponds to \cite[Corollary 3.2]{baccaritrad}. Yuan's Lemma when $K=\R^n$ can be easily proved with a separation argument due to the convexity of the set $\{(x^\T Ax, x^\T Bx)\mid x\in\R^n\}\subseteq\R^2$ given in \cite{dines}. See also some related convexity results in \cite{brickman,polyak98,polyak01,surveySlemma,xia2014}.
  
%

In the next result we extend Yuan's Lemma to $m$ matrices $A_1,\dots,A_m\in\R^{n\times n}$ with $\mbox{rank}(\{A_1,\dots,A_m\})\leq2$. A similar assumption is used to obtain a result of S-Lemma type in \cite[Proposition 3.5]{surveySlemma}, however, our proof is elementary and our result is more general in the sense that it holds for first-order cones.
\begin{lemma}
\label{yuanrank}
Let $A_i\in\R^{n\times n}, i=1,\dots,m, m\geq2$, be such that $A_i=\alpha_i A_1+\beta_i A_2, i=3,\dots,m$ for some $(\alpha_i,\beta_i)\in\R^2, i=3,\dots,m$ and $K\subseteq\R^n$ a first-order cone. Then, the following are equivalent:
\begin{equation}
\label{max-m-quad}
\max_{i=1,\dots,m}\{x^\T A_ix\}\geq0,\forall x\in K,
\end{equation}
\begin{equation}
\label{posit}\exists t\in\Lambda_m\mbox{ such that } \sum_{i=1}^mt_iA_i\mbox{ is positive semidefinite on }K.
\end{equation}
\end{lemma}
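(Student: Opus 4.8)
The plan is to use the rank-$2$ hypothesis to push everything into the plane and then invoke the two-matrix Yuan's Lemma. The implication $(\ref{posit})\Rightarrow(\ref{max-m-quad})$ I would dispose of first: if $M=\sum_i t_iA_i$ is positive semidefinite on $K$ with $t\in\Lambda_m$, then for every $x\in K$ the number $x^\T Mx=\sum_i t_i(x^\T A_ix)$ is a convex combination of the $x^\T A_ix$ and is $\geq0$, so their maximum is $\geq0$. For the converse, write $q_j(x)=x^\T A_jx$ for $j=1,2$, set $\phi(x)=(q_1(x),q_2(x))\in\R^2$, and encode the matrices as planar vectors $v_1=(1,0)$, $v_2=(0,1)$ and $v_i=(\alpha_i,\beta_i)$ for $i\geq3$. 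The hypothesis $A_i=\alpha_iA_1+\beta_iA_2$ then gives $x^\T A_ix=\langle v_i,\phi(x)\rangle$ for all $i$ and $x$, so $(\ref{max-m-quad})$ asserts exactly that $\phi(x)$ avoids the open convex cone $N=\{y\in\R^2\mid\langle v_i,y\rangle<0\ \forall i\}$ for every $x\in K$.

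Next I would split on whether $0\in\mathrm{conv}\{v_1,\dots,v_m\}$. If it is, pick $t\in\Lambda_m$ with $\sum_i t_iv_i=0$; then $\sum_i t_iA_i$ is the zero matrix, which is trivially positive semidefinite on $K$, so $(\ref{posit})$ holds. This case is exactly the one in which $N=\emptyset$ and $(\ref{max-m-quad})$ holds vacuously.

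Otherwise $0\notin\mathrm{conv}\{v_i\}$, and a strict separation gives a direction $d$ with $\langle d,v_i\rangle>0$ for all $i$; hence the $v_i$ lie in an open half-plane and $\mathrm{cone}\{v_1,\dots,v_m\}$ is a pointed convex cone of the plane. Being pointed and planar, it is generated by two of the vectors, say $v_a,v_b$ (possibly equal), spanning its extreme rays, so each $v_i$ is a non-negative combination of $v_a,v_b$. From this one checks that $N=\{y\mid\langle v_a,y\rangle<0,\ \langle v_b,y\rangle<0\}$, and therefore $(\ref{max-m-quad})$ is equivalent to $\max\{x^\T A_ax,x^\T A_bx\}\geq0$ for all $x\in K$. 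Now I would apply Lemma \ref{yuan} to the pair $A_a,A_b$ on the first-order cone $K$, obtaining $s_a,s_b\geq0$ with $s_a+s_b=1$ such that $s_aA_a+s_bA_b$ is positive semidefinite on $K$. Taking $t\in\Lambda_m$ with mass $s_a$ on index $a$, $s_b$ on index $b$, and $0$ elsewhere gives $\sum_i t_iA_i=s_aA_a+s_bA_b$, which establishes $(\ref{posit})$.

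The step I expect to be delicate is the planar reduction: proving that a pointed finitely generated cone in $\R^2$ is generated by two of its own generators, and that passing from the full family to this pair leaves the forbidden cone $N$ unchanged. This is precisely where rank $\leq2$ is essential — it is what lets a maximum of $m$ quadratic forms collapse to a maximum of two, and the statement is known to fail once three independent matrices are present. Care is also needed to absorb the degenerate configurations (a single extreme ray with $a=b$, or some $v_i=0$) into the same case analysis; these are benign but must be checked.
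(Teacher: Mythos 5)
Your argument is correct, and it reaches the conclusion by a genuinely different route than the paper. The paper proceeds by induction on $m$, examining the signs of $(\alpha_m,\beta_m)$: when $\alpha_m,\beta_m\geq0$ the matrix $A_m$ is redundant and is dropped; when exactly one is negative, a different matrix (e.g.\ $A_2$) becomes redundant and is dropped instead, preserving the rank-$2$ structure for the inductive step; and when both are negative an explicit $t\in\Lambda_3$ makes $t_1A_1+t_2A_2+t_mA_m=0$. Your proof does the whole reduction in one shot: encoding each $A_i$ as $v_i\in\R^2$, you either find $0\in\mathrm{conv}\{v_i\}$ (yielding the zero matrix as a convex combination, which subsumes the paper's $\alpha_m<0,\beta_m<0$ case), or you separate, observe the $v_i$ generate a pointed planar cone with two extreme generators $v_a,v_b$, and show the forbidden open cone $N$ is cut out by $v_a,v_b$ alone, so \eqref{max-m-quad} collapses to a two-matrix condition and a single application of Lemma \ref{yuan} gives \eqref{posit}. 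Both proofs rest on the same key lemma and on the same geometric fact that in the plane at most two of the forms matter; the paper discovers the two relevant matrices one elimination at a time, whereas you identify them globally. Your version is more conceptual and makes the role of the rank-$2$ hypothesis transparent, at the cost of invoking (and needing to verify) the standard facts that $N=\emptyset$ iff $0\in\mathrm{conv}\{v_i\}$ and that a pointed finitely generated cone in $\R^2$ is generated by two of its generators; the paper's induction is more pedestrian but entirely self-contained. The degenerate situations you flag ($v_i=0$, which forces Case~1 since then $0$ lies in the convex hull, and $a=b$, where $A_a$ itself is positive semidefinite on $K$) are indeed benign, and the step $N=\{y\mid\langle v_a,y\rangle<0,\ \langle v_b,y\rangle<0\}$ goes through because in Case~2 every $v_i$ is a non-negative, non-trivial combination of $v_a,v_b$. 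I find no gap.
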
 
\begin{proof}
The fact that \eqref{posit} implies \eqref{max-m-quad} can be easily seen by contradiction. The reciprocal implication is done by induction on $m$. If $m=2$, the result follows from Lemma \ref{yuan}. Assume the assertion is true for $m-1\geq2$. Assume \eqref{max-m-quad}. Since $A_m=\alpha_mA_1+\beta_mA_2$, let us consider the following cases:\\
The case $(\alpha_m,\beta_m)=(0,0)$ is trivial. If $\alpha_m\geq0$ and $\beta_m\geq0$, $(\alpha_m,\beta_m)\neq(0,0)$, then $x^\T A_1x<0$ and $x^\T A_2x<0$ imply $x^\T A_mx<0$. Thus, \eqref{max-m-quad} implies $\max_{i=1,\dots,m-1}\{x^\T A_ix\}\geq0,\forall x\in K$. If $\alpha_m<0$ and $\beta_m>0$, we have $x^\T A_1x<0$ and $x^\T A_mx<0$ imply $x^\T A_2x<-\frac{\alpha_m}{\beta_m}x^\T A_1x<0$, hence, \eqref{max-m-quad} implies $\max_{i=1,\dots,m,i\neq2}\{x^\T A_ix\}\geq0,\forall x\in K$. It is clear that any matrix $A_i, i=1,\dots,m,i\neq2$ can be written as the linear combination of two fixed matrices in this set. The case $\alpha_m<0$ and $\beta_m=0$ gives $\max\{x^\T A_1x,x^\T A_mx\}\geq0,\forall x\in K$. Also, the case $\alpha\geq0, \beta<0$ is analogous. In any of the above cases, the result follows from Lemma \ref{yuan} or the inductive assumption.
If $\alpha_m<0$ and $\beta_m<0$, we have $t_1A_1+t_2A_2+t_mA_m=(t_1+\alpha_m t_m)A_1+(t_2+\beta_m t_m)A_2=0$, which is positive semidefinite on $K$, for $(t_1,t_2,t_m)=(\frac{-\alpha_m}{1-\alpha_m-\beta_m},\frac{-\beta_m}{1-\alpha_m-\beta_m},\frac{1}{1-\alpha_m-\beta_m})\in\Lambda_3$.
\jo{\qed}
\end{proof}

\begin{example}
\label{firstexample}
Let $$A_1=\left(\begin{array}{cc}1&-1\\-1&1\end{array}\right), A_2=\left(\begin{array}{cc}-2&1\\1&1\end{array}\right), A_3=\left(\begin{array}{cc}4&-3\\-3&1\end{array}\right).$$
Note that $A_3=2A_1-A_2$. A simple calculation shows that $\max_{i=1,2,3}\{x^\T A_ix\}\geq0$ for all $x$, and, as predicted by Lemma \ref{yuanrank}, $t_1A_1+t_2A_2+t_3A_3$ is positive semidefinite, for instance, for $(t_1,t_2,t_3)=(0,\frac{3}{5},\frac{2}{5})$. As it is done in the proof, this is a consequence of the condition $\max\{x^\T A_2x,x^\T A_3x\}\geq0$ for all $x$ that necessarily holds. 
\end{example}

\begin{example}
\label{secondexample}
Let $$A_1=\left(\begin{array}{cc}-1&0\\0&1\end{array}\right), A_2=\left(\begin{array}{cc}1&2\\2&-2\end{array}\right), A_3=\left(\begin{array}{cc}0&-2\\-2&1\end{array}\right).$$
Note that $A_3=-A_1-A_2$. Therefore, $\frac{1}{3}A_1+\frac{1}{3}A_2+\frac{1}3A_3=0$ is positive semidefinite and clearly it can not exist $x\in\R^2$ with $x^\T A_1x<0$, $x^\T A_2x<0$ and $x^\T A_3x<0$, which implies the maximum of the three quadratic forms is non-negative. Note, however, that for all $i\neq j$, it is not the case that $\max\{x^\T A_ix,x^\T A_jx\}\geq0$ for all $x$.
\end{example}

\section{A Second-order Optimality Condition}

Let us consider the nonlinear optimization problem

\begin{equation}
\label{genp}
\begin{array}{ll}\mbox{Minimize}&f(x),\\
\mbox{subject to}&h(x)=0,\\& g(x)\leq 0,\end{array}\end{equation}

where $f:\R^n\to\R, h:\R^n\to\R^{p_1}, g:\R^n\to\R^{p_2}$ are twice continuously differentiable functions. For a feasible $x$, we define $A(x)=\{i\in\{1,\dots,p_2\}\mid g_i(x)=0\}$, the index set of active inequality constraints.  Given $(\lambda,\mu)\in\R^{p_1}\times\R^{p_2}_+$, we consider the Lagrangian function
$$x\mapsto L(x,\lambda,\mu)=f(x)+\sum_{i=1}^{p_1}\lambda_ih_i(x)+\sum_{i=1}^{p_2}\mu_ig_i(x),$$ with gradient $\nabla L(x,\lambda,\mu)$ and Hessian $\nabla^2 L(x,\lambda,\mu)$, where derivative is taken with respect to $x$. We will assume that at a local minimizer $x^*$ of \eqref{genp}, the Mangasarian-Fromovitz constraint qualification holds:

\begin{definition}
A feasible point $x^*$ of problem \eqref{genp} satisfies the Mangasarian-Fromovitz constraint qualification \cite{mfcq} when $(\alpha,\beta)=(0,0)$ is the only solution of $$\sum_{i=1}^{p_1}\alpha_i\nabla h_i(x^*)+\sum_{i\in A(x^*)}\beta_i\nabla g_i(x^*)=0, \alpha_i\in\R, i=1,\dots,p_1; \beta_i\geq0, i\in A(x^*).$$
\end{definition}

It is well known \cite{gauvin} that at a local minimizer $x^*$, the Mangasarian-Fromovitz constraint qualification is equivalent to the non-emptyness and boundedness of the Lagrange multiplier set $$\Lambda(x^*)=\{(\lambda,\mu)\in\R^{p_1}\times\R^{p_2}_+\mid \nabla L(x^*,\lambda,\mu)=0, \mu^\T g(x^*)=0\}.$$ We observe also that $\Lambda(x^*)$ is always a closed polyhedron. The following optimality condition is well-known:

\begin{theorem}[\cite{bshapiro}] \label{teo-bs}
Let $x^*$ be a local minimizer of \eqref{genp} satisfying the Mangasarian-Fromovitz constraint qualification. Then, for each $d\in C(x^*)$, there is a Lagrange multiplier $(\lambda,\mu)\in\Lambda(x^*)$ satisfying
\begin{equation}
\label{quadform}d^\T\nabla^2 L(x^*,\lambda,\mu)d\geq0,
\end{equation}
where \begin{equation}
\label{critical-cone}
C(x^*)=\left\{\begin{array}{ll}d\in\R^n:& \nabla h_i(x^*)^\T d=0, i=1,\dots,p_1; \nabla g_i(x^*)^\T d\leq0, i\in A(x^*);\\
& \nabla f(x^*)^\T d=0.\end{array}\right\}
\end{equation}
is the critical cone.
\end{theorem}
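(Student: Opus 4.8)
The plan is to recast the statement in its equivalent maximized form and then use a second-order feasible arc to let local optimality expose the desired inequality. First I would record that, by the Gauvin characterization cited just above the theorem, MFCQ makes $\Lambda(x^*)$ a nonempty compact polyhedron, so the continuous map $(\lambda,\mu)\mapsto d^\T\nabla^2 L(x^*,\lambda,\mu)d$ attains its maximum over $\Lambda(x^*)$. Consequently the conclusion ``for each $d$ there exists a multiplier satisfying \eqref{quadform}'' is equivalent to proving $\max_{(\lambda,\mu)\in\Lambda(x^*)}d^\T\nabla^2 L(x^*,\lambda,\mu)d\geq0$ for every $d\in C(x^*)$, the witnessing multiplier being any maximizer (so it is allowed to depend on $d$).

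Next I would fix $d\in C(x^*)$ and introduce the linearized second-order feasibility system for a correction vector $w\in\R^n$: $\nabla h_i(x^*)^\T w=-d^\T\nabla^2 h_i(x^*)d$ for $i=1,\dots,p_1$, together with $\nabla g_i(x^*)^\T w\leq -d^\T\nabla^2 g_i(x^*)d$ for the active indices $i\in A(x^*)$ having $\nabla g_i(x^*)^\T d=0$; active indices with $\nabla g_i(x^*)^\T d<0$ move strictly into the interior to first order and may be discarded. The central geometric step is to show that, under MFCQ, every solution $w$ of this system is the acceleration of a genuine feasible arc, i.e. there exists $x(t)=x^*+td+\tfrac{t^2}{2}w+o(t^2)$ with $h(x(t))=0$ and $g(x(t))\leq0$ for all small $t\geq0$. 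Because the cone $\{0\}^{p_1}\times\R^{p_2}_{-}$ encoding the constraints is polyhedral, its second-order tangent set is flat, so no extra constraint-curvature (sigma) term obstructs this realization.

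Granting such arcs, local optimality gives $f(x(t))\geq f(x^*)$; a second-order Taylor expansion together with criticality $\nabla f(x^*)^\T d=0$ then yields $0\leq d^\T\nabla^2 f(x^*)d+\nabla f(x^*)^\T w$. Since this holds for every admissible $w$, I would pass to the infimum over the feasibility set, obtaining $0\leq\inf_w\{d^\T\nabla^2 f(x^*)d+\nabla f(x^*)^\T w\}$. This infimum is the optimal value of a linear program in $w$, and after matching sign conventions its dual feasible region coincides with $\Lambda(x^*)$ through the stationarity identity $\nabla f(x^*)+\sum_i\lambda_i\nabla h_i(x^*)+\sum_i\mu_i\nabla g_i(x^*)=0$; strong LP duality (whose feasibility hypotheses are again supplied by MFCQ) converts the infimum into $\max_{(\lambda,\mu)\in\Lambda(x^*)}d^\T\nabla^2 L(x^*,\lambda,\mu)d$, the constraint-curvature data cancelling against the multiplier terms to assemble the full Hessian of the Lagrangian. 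Combining the two displays gives the claimed nonnegativity.

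The main obstacle is the middle step: producing a feasible arc with a prescribed acceleration $w$ when only MFCQ is assumed, so that neither LICQ nor a constant-rank condition is available to apply the implicit function theorem directly. I would resolve this via Robinson's stability theory, since MFCQ is exactly Robinson's constraint qualification for this constraint system; it delivers metric regularity of the constraint map at $x^*$ and hence the nonemptiness of the inner second-order tangent set in the direction $d$, which is the machinery underlying the cited reference \cite{bshapiro}. A secondary bookkeeping point is to confirm that the LP dual in the final step retains only multipliers consistent with complementarity, so that the dual region is precisely $\Lambda(x^*)$ and not a strictly larger polyhedron.
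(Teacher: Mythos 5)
The paper does not prove Theorem~\ref{teo-bs}; it states it as a known result imported from \cite{bshapiro}. Your proposal correctly reconstructs the standard argument underlying that reference: realize each admissible second-order correction $w$ as the acceleration of a feasible arc via Robinson's constraint qualification (which MFCQ instantiates here, with no curvature/sigma term since the constraint cone $\{0\}^{p_1}\times\R^{p_2}_-$ is polyhedral), then convert the resulting infimum over $w$ into a maximum over multipliers by LP strong duality. The bookkeeping point you flag resolves in your favor: the dual feasible region is the subset of $\Lambda(x^*)$ consisting of multipliers with $\mu_i=0$ whenever $i\notin A(x^*)$ or $\nabla g_i(x^*)^\T d<0$, and a nonnegative maximum over that subset is enough for the existential conclusion of the theorem.
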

 
Theorem \ref{teo-bs} is the classical second-order necessary optimality condition under Mangasarian-Fromovitz constraint qualification alone. A version of it without constraint qualifications and using Fritz-John multipliers is known as a no-gap optimality condition in terms of sufficiency, see details in \cite{bshapiro}.

Note that condition \eqref{quadform} holds with different Lagrange multipliers for different directions in the critical cone. In this paper we are interested in conditions that ensure the validity of \eqref{quadform} for the same Lagrange multiplier $(\lambda,\mu)\in\Lambda(x^*)$, even if for that we need to consider a smaller subset of the true critical cone $C(x^*)$. This is motivated by algorithmic considerations, since algorithms usually generate only one Lagrange multiplier approximation. See the discussion in \cite{conjnino}.

In the next theorem, we will use Lemma \ref{yuanrank} to formulate a new optimality condition of this type. It is a generalization of the main result of \cite{baccaritrad}, where $\Lambda(x^*)$ was assumed to be a bounded line segment.

\begin{theorem}
\label{main}
Let $x^*$ be a local minimizer of \eqref{genp} satisfying the Mangasarian-Fromovitz constraint qualification. Let $(\lambda^1,\mu^1),\dots,(\lambda^v,\mu^v)$ be the vertices of the Lagrange multiplier set $\Lambda(x^*)$. If the rank of $\{\nabla^2 L(x^*,\lambda^i,\mu^i)\}_{i=1}^v$ is at most $2$, then for every first-order cone $K\subseteq C(x^*)$, there exists $(\lambda,\mu)\in\Lambda(x^*)$ such that
\begin{equation}
\label{coneK}d^\T\nabla^2L(x^*,\lambda,\mu)d\geq0, \forall d\in K.
\end{equation}
\end{theorem}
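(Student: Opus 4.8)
The plan is to reduce the statement to a single application of the extended Yuan's Lemma (Lemma \ref{yuanrank}), feeding it a hypothesis manufactured from the classical per-direction condition of Theorem \ref{teo-bs}. First I would record the two structural facts about the multiplier set. Since $x^*$ is a local minimizer satisfying the Mangasarian--Fromovitz constraint qualification, by \cite{gauvin} the set $\Lambda(x^*)$ is a nonempty bounded polyhedron, hence a polytope equal to the convex hull of its finitely many vertices $(\lambda^1,\mu^1),\dots,(\lambda^v,\mu^v)$. Writing $A_j := \nabla^2 L(x^*,\lambda^j,\mu^j)$, the key observation is that $(\lambda,\mu)\mapsto\nabla^2 L(x^*,\lambda,\mu)$ is affine and, because simplex weights sum to one, the constant term $\nabla^2 f(x^*)$ is preserved under convex combinations; thus for any $(\lambda,\mu)=\sum_{j=1}^v t_j(\lambda^j,\mu^j)$ with $t\in\Lambda_v$ one has $\nabla^2 L(x^*,\lambda,\mu)=\sum_{j=1}^v t_j A_j$. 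Consequently the set of attainable Hessians over $\Lambda(x^*)$ is exactly the convex hull of $\{A_j\}_{j=1}^v$, and the rank hypothesis says precisely that $\{A_j\}_{j=1}^v$ has rank at most $2$. After relabeling so that two of the matrices generate the span (if the rank is $1$ or $0$ this is trivially arranged with vanishing coefficients), this is exactly the hypothesis $A_i=\alpha_i A_1+\beta_i A_2$ of Lemma \ref{yuanrank}.

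Next I would establish condition \eqref{max-m-quad} on $K$. Fix any $d\in K\subseteq C(x^*)$. By Theorem \ref{teo-bs} there is a multiplier $(\lambda,\mu)\in\Lambda(x^*)$ with $d^\T\nabla^2 L(x^*,\lambda,\mu)d\geq0$; expressing this multiplier in barycentric coordinates $t\in\Lambda_v$ and using the affine identity above gives $0\leq\sum_{j=1}^v t_j\, d^\T A_j d$. Since this is a convex combination of the numbers $d^\T A_j d$, it forces $\max_{j=1,\dots,v}\{d^\T A_j d\}\geq0$. As $d\in K$ was arbitrary, \eqref{max-m-quad} holds on $K$ for the family $A_1,\dots,A_v$.

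Finally, I would apply Lemma \ref{yuanrank} (when $v\geq2$; the case $v=1$ is immediate, since then the previous paragraph already gives $d^\T A_1 d\geq0$ on $K$) to obtain $t\in\Lambda_v$ such that $\sum_{j=1}^v t_j A_j$ is positive semidefinite on $K$. Setting $(\lambda,\mu):=\sum_{j=1}^v t_j(\lambda^j,\mu^j)\in\Lambda(x^*)$ and invoking the affine identity once more yields $\nabla^2 L(x^*,\lambda,\mu)=\sum_{j=1}^v t_j A_j$, so that $d^\T\nabla^2 L(x^*,\lambda,\mu)d\geq0$ for all $d\in K$, which is exactly \eqref{coneK}.

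The main obstacle is the middle step: the genuine content lies in converting the direction-dependent guarantee of Theorem \ref{teo-bs}, where the multiplier is allowed to vary with $d$, into a single uniform inequality \eqref{max-m-quad} that Lemma \ref{yuanrank} can consume. The affine bookkeeping --- that the attainable Hessians form the convex hull of the vertex Hessians, and that a nonnegative convex combination forces a nonnegative maximum --- is precisely what exposes that uniform inequality; the rank-$2$ extension of Yuan's Lemma then does the remaining work, collapsing the per-direction multipliers into one valid on the whole first-order cone $K$.
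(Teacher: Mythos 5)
Your proposal is correct and follows essentially the same route as the paper: reduce the per-direction condition of Theorem \ref{teo-bs} to the finite maximum over the vertex Hessians, then apply Lemma \ref{yuanrank} and use the affinity of $\nabla^2 L(x^*,\cdot,\cdot)$ to pull the convex weights back to a single multiplier in $\Lambda(x^*)$. The only cosmetic difference is that the paper justifies the passage to vertices by noting that the linear program over the compact polyhedron $\Lambda(x^*)$ attains its maximum at a vertex, whereas you argue via barycentric coordinates that a nonnegative convex combination forces a nonnegative maximum; these are interchangeable.
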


\begin{proof}
Let $K\subseteq C(x^*)$ be a first-order cone. From Theorem \ref{teo-bs}, we have 
\begin{equation}
\label{maxim}
\max_{(\lambda,\mu)\in\Lambda(x^*)}\{d^\mathtt{T}\nabla^2 L(x^*,\lambda,\mu)d\}\geq0, \forall d\in K.
\end{equation}
Since Mangasarian-Fromovitz constraint qualification implies that $\Lambda(x^*)$ is a non-empy, compact, polyhedral set, let $(\lambda^1,\mu^1),\dots,(\lambda^v,\mu^v)$ be its vertices.
Since for each $d\in K$ the maximization problem \eqref{maxim} is a linear programming with non-empty and compact feasible region, its solution is attained at a vertex, which implies
\begin{equation}
\label{maxim-finite}
\max_{i=1,\dots,v}\{d^\mathtt{T}\nabla^2 L(x^*,\lambda^i,\mu^i)d\}\geq0, \forall d\in K.
\end{equation}
Lemma \ref{yuanrank} gives $t\in\Lambda_v$ such that $\sum_{i=1}^vt_i\nabla^2 L(x^*,\lambda^i,\mu^i)$ is positive semidefinite on $K$, and the linearity of $\nabla^2 L(x^*,\cdot,\cdot)$ gives $\nabla^2 L(x^*,\lambda,\mu)$ positive semidefinite on $K$ for $(\lambda,\mu)=\sum_{i=1}^vt_i(\lambda^i,\mu^i)\in\Lambda(x^*)$.
\jo{\qed}
\end{proof}

Note that when the critical cone is a first-order cone, Theorem \ref{main} gives a full second-order optimality condition in terms of the true critical cone. One condition ensuring this, is the generalized scrict complementarity slackness, namely, that there is at most one index $i_0\in A(x^*)$ such that $\mu_{i_0}=0$ for all $(\lambda,\mu)\in\Lambda(x^*)$. See \cite{baccaritrad}.

Let us revisit Examples \ref{firstexample} and \ref{secondexample}.
The fact that $\max_{i=1,2,3}\{x^\T A_ix\}\geq0$ for all $x$, implies that $x^*=(0,0), z^*=0$ is a global minimizer of the problem of minimizing $z$, subject to $\frac{1}{2}x^\T A_ix-z\leq0, i=1,2,3$. It is easy to see that the Mangasarian-Fromovitz constraint qualification holds, the set of Lagrange multipliers at $(x^*,z^*)$ is the simplex $\Lambda_3$ and the critical cone is $\R^2\times\{0\}$. Note that the quadratic form defined by the Hessian of the Lagrangian at $(x^*,z^*)$ evaluated at each of the three vertices of $\Lambda_3$ and restricted to the critical cone gives the quadratic forms defined by the three matrices $A_1, A_2$ and $A_3$. Hence, the optimality condition of Theorem \ref{main} translates precisely to the fact that $t_1A_1+t_2A_2+t_3A_3$ is positive semidefinite for some $(t_1,t_2,t_3)\in\Lambda_3$, which holds as observed in Examples \ref{firstexample} and \ref{secondexample}. 
In the next session we will give a sufficient condition for the rank assumption to hold under this setting.

\section{Application to a Quadratically-constrained Problem}

It is well known that without the rank assumption, the optimality condition of Theorem \ref{main}  does not hold. Known counter-examples \cite{aru,ani,Baccari2004} are of the form
\begin{equation}
\label{quad}
\begin{array}{ll}\mbox{Minimize}&z,\\
\mbox{subject to}&g_i(x,z):=\frac{1}{2}x^\T A_ix-z\leq 0,i=1,\dots,m,\end{array}\end{equation}
where $A_i\in\R^{n\times n}, i=1,\dots,m$, are symmetric matrices and $(x^*,z^*)=(0,0)\in\R^n\times\R$ is a solution. This is a special case of the well known quadratically-constrained quadratic programming problem, a well-studied difficult non-convex optimization problem \cite{qcqp,pardalos,more}. Note that the linear objective can replace an objective function $f(x)$ by adding the constraint $f(x)-z\leq0$. Note that the critical cone $C(x^*,z^*)$ is the subspace $\R^n\times\{0\}$.
We will prove that for this type of problem, in order to fulfill our rank assumption, it is sufficient to assume that the rank of the Jacobian matrix (that is, the matrix of gradients of active constraints at the solution) increases at most by one in a neighborhood of the solution. It is conjectured in \cite{ams2} that for the general problem \eqref{genp} satisfying Mangasarian-Fromovitz constraint qualification at the local solution $x^*$, when the rank of the Jacobian matrix increases at most by one in a neighborhood of the solution, the optimality condition \eqref{coneK} holds with the cone $K$ equal to the lineality space of $C(x^*)$. This is known to hold when the rank of the Jacobian matrix is constant \cite{ams2}, one less than the number of active constraints \cite{baccaritrad}, one less than the dimension $n$ \cite{conjnino}, or when the Jacobian matrix has a smooth singular value decomposition \cite{conjnino}. We refer the reader to \cite{conjnino} for a discussion about this conjecture and related results. Our result proves this conjecture for the class of quadratically-constrained problems \eqref{quad}. Note that this is not a particular case of any known case where the conjecture holds. In particular, \cite[Example 3.3]{conjnino} shows a problem  of type \eqref{quad} with $m=3$ active constraints and $n=3$ variables where the singular value decomposition is not smooth, and the rank of the Jacobian matrix increases from $1$ to at most $2$ in a neighborhood of the origin. Our theorem applies to this problem, whereas no other known result applies to obtain \eqref{coneK}.
To present our result, we start with the following lemma:

\begin{lemma}
\label{jacobrank}
Given $a\neq0$ and $A_i\in\R^{n\times n}$ symmetric matrices, $i=1,\dots,m$, define $v_i(x)=\left(\begin{array}{c}A_ix\\a\end{array}\right)\in\R^{n+1}, i=1,\dots,m$ and $J(x)\in\R^{(n+1)\times m}$ given column-wise by $J(x)=[v_1(x) \cdots v_m(x)]$. If the rank of $J(x)$ is at most $2$ for all $x\in\R^n$, then the rank of $\{A_1,\dots,A_m\}$ is at most $2$.
\end{lemma}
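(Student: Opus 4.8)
The plan is to transfer the rank condition on $J(x)$ into a condition on the $A_i$ directly, exploiting that every column $v_i(x)$ shares the same nonzero last entry $a$. First I would perform column operations: the column space of $J(x)$ is unchanged if we replace each $v_i(x)$ by $v_i(x)-v_1(x)=\left(\begin{smallmatrix}(A_i-A_1)x\\0\end{smallmatrix}\right)$ for $i=2,\dots,m$, so it is spanned by $v_1(x)$ together with the vectors $w_i(x):=\left(\begin{smallmatrix}(A_i-A_1)x\\0\end{smallmatrix}\right)$. Because $a\neq0$, the vector $v_1(x)$ has nonzero last coordinate while every $w_i(x)$ lies in $\R^n\times\{0\}$; hence $v_1(x)$ is independent of the $w_i(x)$ and
\[
\mathrm{rank}(J(x))=1+\dim\mathrm{span}\{(A_2-A_1)x,\dots,(A_m-A_1)x\}.
\]
Writing $B_i:=A_i-A_1$, the hypothesis $\mathrm{rank}(J(x))\le2$ for all $x$ becomes: for every $x\in\R^n$ the vectors $B_2x,\dots,B_mx$ span a subspace of dimension at most one, so in particular any two of them are linearly dependent.

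The heart of the argument is then the following pairwise claim: if $B,C$ are symmetric and $Bx,Cx$ are linearly dependent for every $x$, then $B$ and $C$ are linearly dependent as matrices. I would prove this by spectral analysis of $B$. For each eigenvector $e$ of $B$ with nonzero eigenvalue, collinearity of $Be$ and $Ce$ forces $Ce\in\mathrm{span}(e)$, so $e$ is also an eigenvector of $C$; testing $x=e_k+e_l$ on two such eigenvectors gives $\lambda_k\nu_l-\lambda_l\nu_k=0$, so the ratio of the $C$-eigenvalue to the $B$-eigenvalue is a common constant $c$ and therefore $C=cB$ on the range of $B$. The remaining work is on $\ker B$: testing $x=e_k+e_j$ with $e_k$ a nonzero-eigenvalue direction and $e_j\in\ker B$ forces $Ce_j\in\mathrm{span}(e_k)$. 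When $\mathrm{rank}(B)\ge2$, two independent such $e_k$ force $Ce_j=0$; when $\mathrm{rank}(B)=1$ one instead uses the symmetry of $C$, since $\langle Ce_j,e_1\rangle=\langle e_j,Ce_1\rangle=0$, to conclude $Ce_j=0$. In all cases $C=cB$.

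Finally I would assemble the pieces. Applying the pairwise claim to each pair $B_i,B_j$ shows that the family $\{B_2,\dots,B_m\}$ is pairwise linearly dependent, and a finite pairwise-dependent family of vectors spans a space of dimension at most one; hence $\dim\mathrm{span}\{B_2,\dots,B_m\}\le1$. Since $A_i=A_1+B_i$, the span of $\{A_1,\dots,A_m\}$ equals the span of $\{A_1,B_2,\dots,B_m\}$, whose dimension is at most $1+\dim\mathrm{span}\{B_2,\dots,B_m\}\le2$, which is the claim. I expect the main obstacle to be the pairwise claim, and within it the treatment of $\ker B$: the rank-one case is precisely where the symmetry hypothesis is indispensable, since the non-symmetric pair $B=e_1e_1^\T$, $C=e_1e_2^\T$ satisfies the collinearity condition yet is not proportional, so the proof must invoke $C=C^\T$ there rather than a dimension count.
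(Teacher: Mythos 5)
Your proposal is correct and follows essentially the same route as the paper: both exploit the common nonzero last entry $a$ to reduce the rank hypothesis on $J(x)$ to pointwise collinearity of differences $(A_i-A_j)x$, and both then prove that two symmetric matrices with pointwise collinear images are proportional by diagonalizing one of them and using symmetry to control its kernel (the paper even makes your closing remark, giving a non-symmetric counter-example after the lemma). The only differences are organizational --- you normalize against $A_1$ and argue pairwise on the differences $B_i=A_i-A_1$, while the paper argues on triples $\{A_{i_1},A_{i_2},A_{i_3}\}$ and handles the kernel in one unified computation by pairing the first eigenvector with all the others.
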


\begin{proof}
Take any three distinct indexes $\{i_1,i_2,i_3\}\subseteq\{1,\dots,m\}$ and let $A=A_{i_1}, B=A_{i_2}, C=A_{i_3}$. Let us prove that $\{A,B,C\}$ is linearly dependent. Since the submatrix of $J(x)$ consisting of columns $i_1, i_2$ and $i_3$ has rank at most $2$, we have that for all $x\in\R^n$, there is $(\alpha_x,\beta_x,\gamma_x)\neq(0,0,0)$ such that $\alpha_xAx+\beta_xBx+\gamma_xCx=0$ and $\alpha_x+\beta_x+\gamma_x=0$. That is, \begin{equation}\label{dep}\forall x\in\R^n,\alpha_x(A-C)x+\beta_x(B-C)x=0, \mbox{ for some }(\alpha_x,\beta_x)\neq(0,0).\end{equation}
Let $v_1,\dots,v_n\in\R^n$ be an orthonormal basis of eigenvectors of the symmetric matrix $B-C$ with corresponding eigenvalues $\lambda_1,\dots,\lambda_n\in\R$. If all eigenvalues are zero, then $B=C$ and the result follows. Suppose $\lambda_1\neq0$ and let us apply \eqref{dep} with $x=v_1$:
$$\alpha_1(A-C)v_1+\beta_1\lambda_1v_1=0,\mbox{ for some }(\alpha_1,\beta_1)\neq(0,0).$$
For $i=1,\dots,n$ let us take the inner product with $v_i$. We have: 
$$\alpha_1 v_1^\T(A-C)v_1+\beta_1\lambda_1=0\mbox{ and }\alpha_1v_i^\T(A-C)v_1=0, i>1.$$
Since $(\alpha_1,\beta_1)\neq(0,0)$ and $\lambda_1\neq0$, we have $\alpha_1\neq0$. That is, \begin{equation}\label{aux1}v_1^\T(A-C)v_1=-\frac{\beta_1}{\alpha_1}\lambda_1\mbox{ and }v_i^\T(A-C)v_1=0, i>1.\end{equation}

Now, for each $i=2,\dots,n$, let us repeat the construction for $x=v_1+v_i$. From \eqref{dep}, there is some $(\alpha_{i},\beta_{i})\neq(0,0)$ such that:
$${\alpha}_{i}(A-C)v_1+{\alpha}_{i}(A-C)v_i+\beta_{i}(\lambda_1v_1+\lambda_iv_i)=0.$$
Taking the inner product with $v_1$ and using \eqref{aux1} and the fact that $v_1^\T(A-C)v_i=v_i^\T(A-C)v_1$, we have:
$$-{\alpha}_{i}\frac{\beta_1}{\alpha_1}\lambda_1+{\beta}_{i}\lambda_1=0,$$ which implies ${\alpha}_{i}\neq0$ and $\frac{{\beta}_{i}}{{\alpha}_{i}}=\frac{\beta_1}{\alpha_1},i>1$. Thus, we conclude that for $u_1=v_1, u_2=v_1+v_2,\dots,u_n=v_1+v_n$ it holds $$(A-C)u_i+\delta(B-C)u_i=0,\mbox{ for all }i=1,\dots,n,$$ where $\delta=\frac{\beta_1}{\alpha_1}$. Since $\{u_1,\dots,u_n\}$ is a basis of $\R^n$, we have $A-C+\delta(B-C)=0$, and the result follows.
\jo{\qed}
\end{proof}

Note that Lemma $\ref{jacobrank}$ does not hold without the assumption of symmetry of the matrices. A counter-example is $A_1=\left(\begin{array}{cc}1&0\\0&0\end{array}\right)$, $A_2=\left(\begin{array}{cc}1&0\\0&1\end{array}\right)$ and $A_3=\left(\begin{array}{cc}1&0\\1&0\end{array}\right)$ with $a=1$. We give next our result for problem \eqref{quad}.

\begin{theorem}
\label{quadtheo}
If $(x^*,z^*)=(0,0)$ is a local minimizer of \eqref{quad} and the rank of the Jacobian matrix $J(x,z)=[\nabla_{x,z} g_1(x,z) \cdots \nabla_{x,z} g_m(x,z)]$ increases at most by one in a neighborhood with respect to the rank at $(x,z)=(x^*,z^*)$, then there exists $t\in\Lambda_m$ such that $\sum_{i=1}^mt_iA_i$ is positive semidefinite. 
\end{theorem}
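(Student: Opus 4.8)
The plan is to reduce the statement to Lemma~\ref{jacobrank} followed by Theorem~\ref{main}; the only genuinely new ingredient is a scaling argument that upgrades the local rank hypothesis into the global one demanded by Lemma~\ref{jacobrank}.

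First I would write the Jacobian out explicitly. Since $\nabla_{x,z}g_i(x,z)=(A_ix,-1)^\T$ does not depend on $z$, the matrix $J(x,z)$ is exactly the matrix $J(x)$ of Lemma~\ref{jacobrank} with $a=-1\neq0$. At the solution $(x^*,z^*)=(0,0)$ each column of $J(0)$ equals $(0,\dots,0,-1)^\T$, so $\mbox{rank}(J(0))=1$, and the hypothesis gives $\mbox{rank}(J(x,z))\leq2$ for all $(x,z)$ in a neighborhood of the origin.

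The key step is to promote this local bound to every $x\in\R^n$. For $c\in\R^m$ one has $J(x)c=0$ if and only if $\sum_{i=1}^mc_iA_ix=0$ and $\sum_{i=1}^mc_i=0$; replacing $x$ by $sx$ with $s\neq0$ leaves both conditions untouched, so $\ker J(sx)=\ker J(x)$ and hence $\mbox{rank}(J(sx))=\mbox{rank}(J(x))$. As every nonzero $x$ is a positive multiple of a point arbitrarily close to the origin, the local bound yields $\mbox{rank}(J(x))\leq2$ for all $x\in\R^n$. Lemma~\ref{jacobrank} then gives $\mbox{rank}(\{A_1,\dots,A_m\})\leq2$.

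Finally I would invoke the second-order theory. Mangasarian-Fromovitz holds at $(0,0)$, since the direction $(0,1)\in\R^n\times\R$ satisfies $\nabla g_i(0,0)^\T(0,1)=-1<0$ for every active constraint; the multiplier set is $\Lambda(x^*,z^*)=\Lambda_m$ with vertices $e_1,\dots,e_m$; and the Hessian of the Lagrangian at the vertex $e_i$, call it $H_i$, is the block-diagonal matrix with blocks $A_i$ and $0$, so the rank of $\{H_1,\dots,H_m\}$ equals $\mbox{rank}(\{A_1,\dots,A_m\})\leq2$. Applying Theorem~\ref{main} with the first-order cone $K=C(x^*,z^*)=\R^n\times\{0\}$ produces $t\in\Lambda_m$ with $\sum_{i=1}^mt_iH_i$ positive semidefinite on $K$; evaluating at $d=(y,0)$ gives $y^\T(\sum_{i=1}^mt_iA_i)y\geq0$ for all $y\in\R^n$, the desired conclusion. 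The main obstacle is precisely the scaling step: the hypothesis is local near the solution while Lemma~\ref{jacobrank} needs the bound for all $x$, and it is the homogeneity of the $x$-dependence of the columns, together with the fact that the fixed last entry $a=-1$ does not disturb the scaling invariance of the kernel, that bridges the gap.
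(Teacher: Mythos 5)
Your proposal is correct and follows essentially the same route as the paper: verify MFCQ, identify $\Lambda(x^*,z^*)=\Lambda_m$ with vertices $e_i$ and Hessians $A_i$, pass from the local rank bound to a global one, and then apply Lemma~\ref{jacobrank} followed by Theorem~\ref{main}. Your explicit scaling argument ($\ker J(sx)=\ker J(x)$ for $s\neq0$) supplies the justification for the step the paper only asserts, namely that rank at most $2$ near the origin implies rank at most $2$ on all of $\R^n$.
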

\begin{proof}
We have $\nabla_{x,z} g_i(x,z)=\left(\begin{array}{c}A_ix\\-1\end{array}\right), i=1,\dots,m$. Hence, Mangasarian-Fromovitz holds at $(x^*,z^*)$, the critical cone $C(x^*,z^*)$ is the subspace $\R^n\times\{0\}$, and the set of Lagrange multipliers is the simplex $\Lambda_m$. Hence, its vertices are given by $\mu^i=e_i, i=1,\dots,m$ where $e_1,\dots,e_m$ is the canonical basis of $\R^m$ and $\nabla_{x}^2 L(x^*,z^*,\mu^i)=A_i, i=1,\dots,m$.
Since the rank of $J(x^*)=J(x^*,z^*)$ is one, the rank assumption implies that $J(x)$ has rank at most $2$ in some neighborhood of $x^*$. This implies that $J(x)$ has rank at most $2$ for all $x\in\R^n$, and the result follows from Lemma \ref{jacobrank} and Theorem \ref{main}.

Note that the local minimality of $(0,0)$ implies that $\max_{i=1,\dots,m}\{x^\T A_ix\}\geq0$ for all $x$ in a small neighborhood of the origin, which implies it holds for all $x\in\R^n$, hence the result would also follow from Lemma \ref{jacobrank} and Lemma \ref{yuanrank}.
\jo{\qed}
\end{proof}

\section{Conclusions}

Although no-gap necessary and sufficient second-order optimality conditions of Fritz-John type are well established in the optimization literature \cite{bshapiro}, these type of optimality condition has little relations to second-order global convergence results of algorithms. Hence the importance of developing weak conditions ensuring the existence of a Lagrange multiplier with a positive semidefinite Lagrangian Hessian on the critical cone, or a meaningful subset.

We developed a second-order optimality condition of this type where the Lagrange multiplier set can be larger than a bounded line segment, generalizing \cite{baccaritrad}. This was done by an  extension of Yuan's Lemma \cite{yuan} to a set of linearly dependent matrices. The results were applied to address a conjecture from \cite{ams2} about a second-order optimality condition of this type under non-constant rank on a class of quadratically-constrained problems.

\section*{Acknowledgement}
This work was supported by FAPESP (grants 2013/05475-7 and 2016/02092-8) and CNPq. \jo{The author would like to acknowledge the valuable comments and suggestions made by the referees.}


\si{}

\jo{}

\end{document}